\documentclass[a4paper,11pt,leqno]{smfart}

\usepackage{amssymb,amsmath,enumerate,verbatim,mathrsfs,graphics,graphicx,mathptm,float,mathtools}
\usepackage[T1]{fontenc}
\usepackage[english, francais]{babel}
\usepackage[all]{xy}
\usepackage[pdfstartview=FitH,
            colorlinks=true,
            linkcolor=blue,
            urlcolor=blue,
            citecolor=red,
            bookmarks,
            bookmarksopen=true,
            bookmarksnumbered=true]{hyperref}

\usepackage{cleveref}
\setcounter{tocdepth}{2}

\theoremstyle{plain}

\newtheorem{theoalph}{Theor\`eme}

\newtheorem{thmalph}[theoalph]{Th\'eor\`eme}

\theoremstyle{definition}

\theoremstyle{remark}

\theoremstyle{plain}
\newtheorem{thmsec}{Th\'eor\`eme}[section]

\newtheorem{pro}[thmsec]{Proposition}
\newtheorem{lem}[thmsec]{Lemme}
\newtheorem{cor}[thmsec]{Corollaire}

\theoremstyle{definition}
\newtheorem{defin}[thmsec]{D\'efinition}

\theoremstyle{remark}
\newtheorem{rem}[thmsec]{Remarque}

\def\og{\leavevmode\raise.3ex\hbox{$\scriptscriptstyle\langle\!\langle$~}}
\def\fg{\leavevmode\raise.3ex\hbox{~$\!\scriptscriptstyle\,\rangle\!\rangle$}}

\setlength{\textwidth}{16.7cm} \setlength{\textheight}{21cm}
\setlength{\topmargin}{0cm} \setlength{\headheight}{0.59cm}
\setlength{\headsep}{1.5cm} \setlength{\oddsidemargin}{-0.4cm}
\setlength{\evensidemargin}{-0.4cm} \marginparwidth 1.9cm
\marginparsep 0.4cm \marginparpush 0.4cm \footskip 2.2cm
\setlength{\baselineskip}{0.8cm}

\addtocounter{section}{0}             
\numberwithin{equation}{section}       


\newcommand{\pp}{\mathbb{P}^{2}_{\mathbb{C}}}
\newcommand{\pd}{\mathbb{\check{P}}^{2}_{\mathbb{C}}}

\newcommand\Sing{\mathrm{Sing}}

\newcommand\Leg{\mathrm{Leg}}
\newcommand\IF{\mathrm{I}_{\mathcal{F}}}
\newcommand\IinvF{\mathrm{I}_{\mathcal{F}}^{\mathrm{inv}}}
\newcommand\ItrF{\mathrm{I}_{\mathcal{F}}^{\hspace{0.2mm}\mathrm{tr}}}

\newcommand\F{\mathcal{F}}
\newcommand\W{\mathcal{W}}

\newcommand\omegaoverline{{\mspace{2mu}\overline{\mspace{-1.4mu}\omega\mspace{-1.4mu}}\mspace{2mu}}}

\newcommand\elltext{\scalebox{1.1}{\ensuremath \ell}}



\begin{document}
\title[Classification des feuilletages convexes de degr\'{e} deux]{Une nouvelle démonstration de la classification des feuilletages convexes de degr\'{e} deux sur $\pp$}
\date{\today}

\author{Samir \textsc{Bedrouni}}

\address{Facult\'e de Math\'ematiques, USTHB, BP $32$, El-Alia, $16111$ Bab-Ezzouar, Alger, Alg\'erie}
\email{sbedrouni@usthb.dz}

\author{David \textsc{Mar\'{\i}n}}

\thanks{D. Mar\'{\i}n acknowledges financial support from the Spanish Ministry of Economy and Competitiveness, through grant MTM2015-66165-P and the "Mar\'{\i}a de Maeztu" Programme for Units of Excellence in R\&D (MDM-2014-0445).}

\address{BGSMath and Departament de Matem\`{a}tiques Universitat Aut\`{o}noma de Barcelona E-08193  Bellaterra (Barcelona) Spain}

\email{davidmp@mat.uab.es}

\keywords{feuilletage convexe, tissu dual, discriminant, singularité, diviseur d'inflexion}

\maketitle{}

\begin{altabstract}
\selectlanguage{english}
A holomorphic foliation on $\pp,$ or a real analytic foliation on $\mathbb{P}^{2}_{\mathbb{R}},$ is said to be convex if its leaves other than straight lines have no inflection points. The classification of the convex foliations of degree $2$ on $\pp$ has been established in $2015$ by C.~\textsc{Favre} and J.~\textsc{Pereira}. The main argument of this classification was a result obtained in~$2004$  by~D.~\textsc{Schlomiuk} and N.~\textsc{Vulpe} concerning the real polynomial vector fields of degree $2$ whose associated foliation on $\mathbb{P}^{2}_{\mathbb{R}}$ is convex. We present here a new proof of this classification, that is simpler, does not use this result and does not leave the holomorphic framework. It is based on the properties of certain models of convex foliations of $\pp $ of arbitrary degree and of the discriminant of the dual web of a foliation of $\pp$.
{\it 2010 Mathematics Subject Classification. --- 37F75, 32S65, 32M25.}
\end{altabstract}

\selectlanguage{french}
\begin{abstract}
\noindent Un feuilletage holomorphe sur $\pp$ ou analytique réel sur $\mathbb{P}^{2}_{\mathbb{R}}$ est dit convexe si ses feuilles qui ne sont pas des droites n'ont pas de points d'inflexion. La classification des feuilletages convexes de degré $2$ sur $\pp$ a été établie~en $2015$ par~C.~\textsc{Favre} et J.~\textsc{Pereira}. L'argument principal de cette classification était un résultat obtenu en~$2004$ par~D.~\textsc{Schlomiuk} et N.~\textsc{Vulpe} concernant les champs de vecteurs réels polynomiaux de degré $2$ dont le feuilletage de $\mathbb{P}^{2}_{\mathbb{R}}$ associé est convexe. Nous présentons ici une nouvelle démonstration de cette classification, plus simple, n'utilisant pas ce résultat et ne sortant pas du cadre holomorphe; elle s'appuie sur des propriétés de certains modèles de feuilletages convexes de $\pp$ de degré quelconque et du discriminant du tissu dual d'un feuilletage de $\pp.$
{\it Classification math\'ematique par sujets (2010). --- 37F75, 32S65, 32M25.}
\end{abstract}

\section*{Introduction}
\bigskip

\noindent L'ensemble $\mathbf{F}(d)$ des feuilletages de degré $d$ sur $\pp$ s'identifie à un ouvert de \textsc{Zariski} dans un espace projectif de dimension $(d+2)^{2}-2$ sur lequel agit le groupe~$\mathrm{Aut}(\pp).$ Suivant \cite{MP13} un feuilletage de $\mathbf{F}(d)$ est dit {\sl convexe} si ses feuilles qui ne sont pas des droites n'ont pas de points d'inflexion.

\noindent D'après \cite[Proposition~2, page~23]{Bru15} tout feuilletage de degré~$0$ ou~$1$ est convexe. Pour $d\geq2$, l'ensemble des feuilletages convexes de $\mathbf{F}(d)$ est un fermé de \textsc{Zariski} propre de $\mathbf{F}(d)$ et il contient les feuilletages $\mathcal{H}_{1}^{d}$, resp. $\F_{1}^{d}$, resp. $\F_{0}^{d}$ définis en carte affine par les $1$-formes (\emph{voir} \cite[Proposition~4.1]{BM18}, \cite[page~75]{Bed17} et \cite[page~179]{MP13})
\begin{align*}
&\omega_{\hspace{0.2mm}1}^{\hspace{0.2mm}d}=y^d\mathrm{d}x-x^d\mathrm{d}y,&&
\text{resp.}\hspace{1.5mm}\omegaoverline_{1}^{d}=y^{d}\mathrm{d}x+x^{d}(x\mathrm{d}y-y\mathrm{d}x),&&
\text{resp.}\hspace{1.5mm}\omegaoverline_{0}^{d}=(x^{d}-x)\mathrm{d}y-(y^{d}-y)\mathrm{d}x.
\end{align*}
\noindent Les feuilletages $\mathcal{H}_{1}^{d}$ et $\F_{1}^{d}$ appartiennent tous deux à l'adhérence dans $\mathbf{F}(d)$ de l'orbite sous l'action de $\mathrm{Aut}(\pp)$ du feuilletage $\F_{0}^{d},$ dit feuilletage de \textsc{Fermat} de degré $d.$ Notons de plus que $\mathcal{H}_{1}^{d}$ est {\sl homogène} au sens où il est invariant par homothétie.

\noindent En $2015$ \textsc{Favre} et \textsc{Pereira} \cite[Proposition~7.4]{FP15} ont classifié les feuilletages convexes de degré $2$ sur $\pp.$ Plus précisément ils ont montré le résultat suivant.
\begin{thmalph}[\cite{FP15}]\label{thmalph:class-feuilletage-convexe-2}
{\sl \`A automorphisme de $\pp$ pr\`es, il y a trois feuilletages convexes de degré deux sur le plan projectif complexe, à savoir les feuilletages $\mathcal{H}_{1}^{2}$, $\mathcal{F}_{1}^{2}$ et $\mathcal{F}_{0}^{2}$ d\'ecrits respectivement en carte affine par les~$1$-formes suivantes

\begin{itemize}
\item [\texttt{1. }] $\omega_{1}^{2}=y^2\mathrm{d}x-x^2\mathrm{d}y$;
\smallskip
\item [\texttt{2. }] $\omegaoverline_{1}^{2}=y^{2}\mathrm{d}x+x^{2}(x\mathrm{d}y-y\mathrm{d}x)$;
\smallskip
\item [\texttt{3. }] $\omegaoverline_{0}^{2}=(x^{2}-x)\mathrm{d}y-(y^{2}-y)\mathrm{d}x$.
\end{itemize}
}
\end{thmalph}

\noindent L'argument fondamental de cette classification était le résultat de \textsc{Schlomiuk} et \textsc{Vulpe} dans \cite[Théorème 50]{SV04}. Ces derniers donnent en carte affine une liste \cite[Table~2]{SV04} de formes normales pour les feuilletages convexes de degré $2$ sur $\mathbb{P}^{2}_{\mathbb{R}}$ (ils considèrent en effet des champs de vecteurs de type $\mathrm{X}=A(x,y)\frac{\partial{}}{\partial{x}}+B(x,y)\frac{\partial{}}{\partial{y}},$ où $A$ et $B$ sont des polynômes de $\mathbb{R}[x,y]$ vérifiant $\mathrm{pgcd}(A,B)=1$ et $\max(\deg A,\deg B)=2$). \textsc{Favre} et \textsc{Pereira} \cite[Proposition~7.4]{FP15} ont remarqué que les arguments de \cite[Théorème~50]{SV04} s'appliquent de façon identique aux champs de vecteurs complexes. Leur démonstration a ainsi consisté à réduire le nombre de modèles de champs de vecteurs réels présentés dans \cite[Table~2]{SV04} en cherchant ceux qui sont conjugués par un automorphisme de~$\pp.$

\noindent Dans cet article nous donnons une nouvelle démonstration de cette classification, n'utilisant pas \cite[Théorème~50]{SV04} et ne sortant pas du cadre analytique complexe; elle repose sur certaines propriétés des feuilletages $\mathcal{H}_{1}^{d},$ $\F_{0}^{d}$, $\F_{1}^{d}$ (\cite[Proposition~4.1]{BM18}, \cite[Proposition~6.3]{BM18}, Proposition~\ref{pro:F1^d} démontrée au~\S\ref{sec:Demonstration-Theoreme-A}) et du discriminant du tissu dual d'un feuilletage de $\pp$ (\cite[Lemme~2.2]{BFM13} et \cite[Proposition~3.3]{MP13}), \emph{voir} \S\ref{sec:Demonstration-Theoreme-A}.


\section{Singularités, diviseur d'inflexion et tissu dual d'un feuilletage de $\pp$}\label{sec:singularité-diviseur-inflexion-tissu-dual}
\bigskip

\noindent Un feuilletage holomorphe $\mathcal{F}$ de degré $d$ sur~$\pp$ est défini en coordonnées homogènes $[x:y:z]$ par une $1$-forme du type  $$\omega=a(x,y,z)\mathrm{d}x+b(x,y,z)\mathrm{d}y+c(x,y,z)\mathrm{d}z,$$ o\`{u} $a,$ $b$ et $c$ sont des polynômes homogènes de degré $d+1$ sans facteur commun satisfaisant la condition d'\textsc{Euler} $i_{\mathrm{R}}\omega=0$, où $\mathrm{R}=x\frac{\partial{}}{\partial{x}}+y\frac{\partial{}}{\partial{y}}+z\frac{\partial{}}{\partial{z}}$ désigne le champ radial et $i_{\mathrm{R}}$ le produit intérieur par $\mathrm{R}$. Le {\sl lieu singulier} $\mathrm{Sing}\mathcal{F}$ de $\mathcal{F}$ est le projectivisé du lieu singulier de~$\omega$ $$\mathrm{Sing}\omega=\{(x,y,z)\in\mathbb{C}^3\,\vert \, a(x,y,z)=b(x,y,z)=c(x,y,z)=0\}.$$

\noindent Rappelons quelques notions locales attachées au couple $(\mathcal{F},s)$, où $s\in\Sing\mathcal{F}$. Le germe de $\F$ en $s$ est défini, à multiplication près par une unité de l'anneau local $\mathcal{O}_s$ en $s$, par un champ de vecteurs
\begin{small}
$\mathrm{X}=A(\mathrm{u},\mathrm{v})\frac{\partial{}}{\partial{\mathrm{u}}}+B(\mathrm{u},\mathrm{v})\frac{\partial{}}{\partial{\mathrm{v}}}$.
\end{small}
\noindent La {\sl multiplicité algébrique} $\nu(\mathcal{F},s)$ de $\mathcal{F}$ en $s$ est donnée par $$\nu(\mathcal{F},s)=\min\{\nu(A,s),\nu(B,s)\},$$ où $\nu(g,s)$ désigne la multiplicité algébrique de la fonction $g$ en $s$. L'{\sl ordre de tangence} entre $\mathcal{F}$ et une droite générique passant par $s$ est l'entier $$\hspace{0.8cm}\tau(\mathcal{F},s)=\min\{k\geq\nu(\mathcal{F},s)\hspace{1mm}\colon\det(J^{k}_{s}\,\mathrm{X},\mathrm{R}_{s})\neq0\},$$ où $J^{k}_{s}\,\mathrm{X}$ est le $k$-jet de $\mathrm{X}$ en $s$ et $\mathrm{R}_{s}$ est le champ radial centré en $s$.
\smallskip

\noindent La singularité $s$ de $\mathcal{F}$ est dite {\sl radiale} si $\nu(\mathcal{F},s)=1$ et si de plus $\tau(\mathcal{F},s)\geq2$. Si tel est le cas, l'entier naturel $\tau(\mathcal{F},s)-1$, compris entre $1$ et $d-1,$ est appelé l'{\sl ordre de radialité} de $s.$
\medskip

\noindent Rappelons la notion du diviseur d'inflexion de $\F$. Soit $\mathrm{Z}=E\frac{\partial}{\partial x}+F\frac{\partial}{\partial y}+G\frac{\partial}{\partial z}$ un champ de vecteurs homogène de degré $d$ sur $\mathbb{C}^3$ non colinéaire au champ radial décrivant $\mathcal{F},$ {\it i.e.} tel que $\omega=i_{\mathrm{R}}i_{\mathrm{Z}}\mathrm{d}x\wedge\mathrm{d}y\wedge\mathrm{d}z.$ Le {\sl diviseur d'inflexion} de $\mathcal{F}$, noté $\IF$, est le diviseur défini par l'équation
\begin{equation}\label{equa:ext1}
\left| \begin{array}{ccc}
x &  E &  \mathrm{Z}(E) \\
y &  F &  \mathrm{Z}(F)  \\
z &  G &  \mathrm{Z}(G)
\end{array} \right|=0.
\end{equation}
Ce diviseur a été étudié dans \cite{Per01} dans un contexte plus général. En particulier, les propriétés suivantes ont été prouvées.
\begin{enumerate}
\item [\texttt{1.}] Sur $\mathbb{P}^{2}_{\mathbb{C}}\smallsetminus\mathrm{Sing}\mathcal{F},$ $\IF$ coïncide avec la courbe décrite par les points d'inflexion des feuilles de $\mathcal{F}$;
\item [\texttt{2.}] Si $\mathcal{C}$ est une courbe algébrique irréductible invariante par $\mathcal{F},$ alors $\mathcal{C}\subset \IF$ si et seulement si $\mathcal{C}$ est une droite invariante;
\item [\texttt{3.}] $\IF$ peut se décomposer en $\IF=\IinvF+\ItrF,$ où le support de $\IinvF$ est constitué de l'ensemble des droites invariantes par $\mathcal{F}$ et où le support de $\ItrF$ est l'adhérence des points d'inflexion qui sont isolés le long des feuilles de $\mathcal{F}$;
\item [\texttt{4.}] Le degré du diviseur $\IF$ est $3d.$
\end{enumerate}

\begin{defin}[\cite{MP13}]
\noindent Un feuilletage $\mathcal{F}$ sur $\pp$ est dit {\sl convexe} si son diviseur d'inflexion $\IF$ est totalement invariant par $\mathcal{F}$, {\it i.e.} si $\IF$ est le produit de droites invariantes par $\F.$
\end{defin}
\medskip

\noindent Rappelons maintenant la définition d'un $k$-tissu sur une surface complexe $S$.
\begin{defin}
Soit $k\geq1$ un entier. Un {\sl $k$-tissu (global)} $\mathcal{W}$ sur $S$ est la donnée d'un recouvrement ouvert $(U_{i})_{i\in I}$ de $S$ et d'une collection de $k$-formes symétriques $\omega_{i}\in \mathrm{Sym}^{k}\Omega^{1}_{S}(U_{i})$, à zéros isolés, satisfaisant:
\begin{itemize}
\item [($\mathfrak{a}$)] il existe $g_{ij}\in \mathcal{O}^{*}_{S}(U_{i}\cap U_{j})$ tel que $\omega_i$ coïncide avec $g_{ij}\omega_{j}$ sur $U_{i}\cap U_{j}$;
\item [($\mathfrak{b}$)] en tout point générique $m$ de $U_{i},$ $\omega_{i}(m)$ se factorise en produit de $k$ formes linéaires deux à deux non colinéaires.
\end{itemize}
\end{defin}
\vspace{-2mm}

\noindent Le {\sl discriminant} $\Delta(\mathcal{W})$ de $\mathcal{W}$ est le diviseur défini localement par $\Delta(\omega_i)=0$, où $\Delta(\omega_i)$ est le discriminant de la $k$-forme symétrique~$\omega_{i}\in\mathrm{Sym}^{k}\Omega^{1}_{S}(U_{i})$, \emph{voir} \cite[Chapitre~1, \S 1.3.4]{PP15}. Le support de $\Delta(\mathcal{W})$ est constitué des points de~$S$ qui ne vérifient pas la condition ($\mathfrak{b}$). Lorsque~$k=1$ cette condition est toujours vérifiée et on retrouve la définition usuelle d'un feuilletage holomorphe~$\mathcal{F}$ sur $S.$

\noindent Un $k$-tissu global $\W$ sur $S$ est dit {\sl décomposable} s'il existe des tissus globaux $\mathcal{W}_{1},\mathcal{W}_{2}$ sur $S$ n'ayant pas de sous-tissus communs tels que $\mathcal{W}$ soit la superposition de $\mathcal{W}_{1}$ et $\mathcal{W}_{2}$; on écrira $\mathcal{W}=\mathcal{W}_{1}\boxtimes\mathcal{W}_{2}.$ Dans le cas contraire $\W$ est dit {\sl irréductible}. On dit que $\mathcal{W}$ est {\sl complètement décomposable} s'il existe des feuilletages globaux $\mathcal{F}_{1},\ldots,\mathcal{F}_{k}$ sur $S$ tels que $\mathcal{W}=\mathcal{F}_{1}\boxtimes\cdots\boxtimes\mathcal{F}_{k}.$ Pour plus de détails sur ce sujet, nous renvoyons à \cite{PP15}.
\smallskip

\noindent Revenons au cas qui nous intéresse: $S=\pp$. Se donner un $k$-tissu sur $\pp$ revient à se donner une $k$-forme symétrique polynomiale $\omega=\sum_{i+j=k}a_{ij}(x,y)\mathrm{d}x^{i}\mathrm{d}y^{j}$, à zéros isolés et de discriminant non identiquement nul. Ainsi tout $k$-tissu sur $\pp$ peut se lire dans une carte affine donnée $(x,y)$ de $\pp$ par une équation différentielle polynomiale $F(x,y,y')=0$ de degré $k$ en $y'$.

\noindent Suivant~\cite{MP13} à tout feuilletage $\F$ de degré $d\geq1$ sur $\pp$ est associé un $d$-tissu irréductible sur le plan projectif dual $\pd$, appelé {\sl transformée de \textsc{Legendre}} (ou {\sl tissu dual}) de $\F$, et noté $\Leg\F$; les feuilles de $\Leg\F$ sont les droites tangentes aux feuilles de $\F.$ Plus explicitement, soit $(x,y)$ une carte affine de $\pp$ et considérons la carte affine $(p,q)$ de $\pd$ associée à la droite $\{y=px-q\}\subset{\mathbb{P}^{2}_{\mathbb{C}}}$; si $\F$ est défini par une $1$-forme $\omega=A(x,y)\mathrm{d}x+B(x,y)\mathrm{d}y,$ où $A,B\in\mathbb{C}[x,y],$ $\mathrm{pgcd}(A,B)=1$, alors $\Leg\F$ est donné par l'équation différentielle implicite
\[
\check{F}(p,q,x):=A(x,px-q)+pB(x,px-q)=0, \qquad \text{avec} \qquad x=\frac{\mathrm{d}q}{\mathrm{d}p}.
\]
L'application de Gauss est l'application rationnelle $\mathcal{G}_{\F}\hspace{1mm}\colon\pp\dashrightarrow \pd$ qui à un point régulier $m$ associe la droite tangente $\mathrm{T}_{\hspace{-0.1mm}m}\F$. Si $\mathcal{C}\subset\pp$ est une courbe passant par certains points singuliers de $\F$, on définit $\mathcal{G}_{\mathcal{F}}(\mathcal{C})$ comme étant l'adhérence de $\mathcal{G}_{\F}(\mathcal{C}\setminus\Sing\F)$. Il résulte de \cite[Lemme~2.2]{BFM13} que
\begin{equation}\label{equa:Delta-LegF}
\Delta(\Leg\F)=\mathcal{G}_{\F}(\ItrF)\cup\check{\Sigma}_{\F},
\end{equation}
où $\check{\Sigma}_{\F}$ désigne l'ensemble des droites duales des points de $\Sigma_{\F}:=\{s\in\Sing\F\hspace{0.8mm}:\hspace{0.8mm}\tau(\F,s)\geq2\}$.

\section{Démonstration du Théorème~\ref{thmalph:class-feuilletage-convexe-2}}\label{sec:Demonstration-Theoreme-A}
\bigskip

\noindent Comme nous l'avons dit dans l'Introduction, la preuve du Théorème~\ref{thmalph:class-feuilletage-convexe-2} s'appuie sur certaines propriétés des feuilletages  $\mathcal{H}_{1}^{d}$, $\F_{0}^{d},$ $\F_{1}^{d}$ et du discriminant du tissu dual d'un feuilletage de $\pp.$

\noindent Notons d'abord que le feuilletage homogène $\mathcal{H}_{1}^{d}$ possède deux singularités radiales d'ordre maximal $d-1.$ En fait cette propriété caractérise la $\mathrm{Aut}(\pp)$-orbite de $\mathcal{H}_{1}^{d}$ comme le montre le résultat suivant, déduit~de~\cite[Proposition~4.1]{BM18}:
\begin{pro}[\cite{BM18}]\label{pro:H1^d}
{\sl Soit $\mathcal{H}$ un feuilletage homogène de degré $d\geq2$ sur $\pp$ ayant deux singularités radiales distinctes d'ordre maximal $d-1.$ Alors $\mathcal{H}$ est linéairement conjugué au feuilletage $\mathcal{H}_{1}^{d}$ défini par la~$1$-forme~$\omega_{\hspace{0.2mm}1}^{\hspace{0.2mm}d}=y^d\mathrm{d}x-x^d\mathrm{d}y.$
}
\end{pro}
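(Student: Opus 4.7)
L'id\'ee est de ramener $\mathcal{H}$ \`a la forme normale $\omega_{1}^{d}$ par une suite de conjugaisons lin\'eaires, en exploitant la forte contrainte impos\'ee par la pr\'esence de deux singularit\'es radiales d'ordre maximal. \'Ecrivons $\mathcal{H}$ dans une carte affine par $\omega=A(x,y)\mathrm{d}x+B(x,y)\mathrm{d}y$, o\`u $A,B$ sont des polyn\^omes homog\`enes de degr\'e $d$ sans facteur commun.

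On commence par localiser les deux singularit\'es radiales $s_{1},s_{2}$. L'origine est l'unique singularit\'e de $\mathcal{H}$ dans la carte affine (car $\pgcd(A,B)=1$), et sa multiplicit\'e alg\'ebrique vaut $\nu(\mathcal{H},[0:0:1])=d\geq2$; elle ne peut donc pas \^etre radiale. On en d\'eduit que $s_{1},s_{2}\in L_{\infty}:=\{z=0\}$. Comme $\mathrm{GL}_{2}(\mathbb{C})\subset\mathrm{Aut}(\pp)$ agit doublement transitivement sur $L_{\infty}\simeq\sph$, quitte \`a conjuguer $\mathcal{H}$ par un \'el\'ement convenable de $\mathrm{GL}_{2}(\mathbb{C})$, on peut supposer $s_{1}=[1:0:0]$ et $s_{2}=[0:1:0]$.

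\'Ecrivons alors $A=\sum_{i=0}^{d}a_{i}x^{i}y^{d-i}$ et $B=\sum_{i=0}^{d}b_{i}x^{i}y^{d-i}$. Un calcul dans les coordonn\'ees locales $(u,v)=(x/y,1/y)$ au voisinage de $s_{2}$ montre que le champ dual $\X$ d\'ecrivant $\mathcal{H}$ satisfait la relation tr\`es simple
\[\det(\X,\mathrm{R}_{s_{2}})=v\,B(u,1)=\sum_{i=0}^{d}b_{i}\,u^{i}\,v.\]
Compte tenu de la d\'efinition de $\tau(\mathcal{H},s_{2})$, le fait que $s_{2}$ soit radiale d'ordre $d-1$ (i.e.\ $\nu(\mathcal{H},s_{2})=1$ et $\tau(\mathcal{H},s_{2})=d$) se traduit alors par $b_{0}=b_{1}=\cdots=b_{d-1}=0$, $b_{d}\neq0$ et $a_{0}\neq0$, c'est-\`a-dire $B(x,y)=b_{d}x^{d}$. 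Par l'argument sym\'etrique en $s_{1}$, apr\`es \'echange de $x$ et $y$, on obtient pareillement $A(x,y)=a_{0}y^{d}$ avec $a_{0}\neq0$.

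Il ne reste plus qu'une normalisation \'el\'ementaire: $\omega=a_{0}y^{d}\mathrm{d}x+b_{d}x^{d}\mathrm{d}y$ avec $a_{0}b_{d}\neq0$, et une homoth\'etie $(x,y)\mapsto(\lambda x,\mu y)$ avec $(\lambda/\mu)^{d-1}=-b_{d}/a_{0}$ (combin\'ee \`a la multiplication de $\omega$ par un scalaire) transforme $\omega$ en $\omega_{1}^{d}=y^{d}\mathrm{d}x-x^{d}\mathrm{d}y$. Le point technique principal est le calcul de $\det(\X,\mathrm{R}_{s_{2}})$ et sa traduction en la condition alg\'ebrique $B=b_{d}x^{d}$; les autres \'etapes sont des routines de normalisation.
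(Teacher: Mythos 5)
Your proof is correct, and it is worth noting that the paper itself gives no proof of this proposition: it is simply quoted from \cite{BM18}, so there is nothing internal to compare against, and your argument supplies a self-contained elementary justification. The key computation checks out: at $s_{2}=[0:1:0]$, in the chart $(u,v)=(x/y,1/y)$ the foliation is generated by $\X=\bigl(uA(u,1)+B(u,1)\bigr)\frac{\partial}{\partial u}+vA(u,1)\frac{\partial}{\partial v}$, whence $\det(\X,\mathrm{R}_{s_{2}})=vB(u,1)$; since $\mathrm{R}_{s_{2}}$ is linear, $\det(J^{k}_{s_2}\X,\mathrm{R}_{s_{2}})$ is the truncation of $vB(u,1)$ in degree $\leq k+1$, so $\tau(\mathcal{H},s_{2})=d$ is exactly $b_{0}=\dots=b_{d-1}=0$ and $b_{d}\neq0$, after which $J^{1}_{s_2}\X=a_{0}\mathrm{R}_{s_{2}}$ forces $a_{0}\neq0$ for $\nu(\mathcal{H},s_{2})=1$. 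The symmetric computation at $s_{1}$ gives $A=a_{0}y^{d}$, and the final diagonal normalization works because $d-1\geq1$ (only a harmless orientation issue: depending on whether you transform $\omega$ by pullback or pushforward, the condition reads $(\lambda/\mu)^{d-1}=-a_{0}/b_{d}$ rather than $-b_{d}/a_{0}$; both are solvable). Two small points you use implicitly and could state: the reduction to $s_{1}=[1:0:0]$, $s_{2}=[0:1:0]$ requires the conjugating automorphism to preserve homogeneity, which the $\mathrm{GL}_{2}(\mathbb{C})$-action (fixing the origin and acting doubly transitively on the line at infinity) indeed guarantees; and the fact that a homogeneous foliation of degree $d\geq2$ is given in the affine chart by homogeneous $A,B$ of degree exactly $d$ with $xA+yB\not\equiv0$, which is what places all radial singularities on the line at infinity.
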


\noindent On en tire en particulier le:
\begin{cor}\label{cor:class-feuilletage-homogène-convexe-2}
{\sl \`A automorphisme de $\pp$ pr\`es, il y a un et un seul feuilletage homogène convexe de degré~$2$ sur le plan projectif complexe, à savoir le feuilletage $\mathcal{H}_{1}^{2}$ décrit par la $1$-forme~$\omega_{1}^{2}=y^2\mathrm{d}x-x^2\mathrm{d}y.$
 }
\end{cor}

\begin{proof}[\sl D\'emonstration]
On sait d'après~\cite[Proposition~2.2]{BM18} que tout feuilletage homogène convexe de degré supérieur ou égal à $2$ sur $\pp$ possède au moins deux singularités radiales distinctes. L'énoncé découle alors immédiatement de \cite[Proposition~4.1]{BM18} (\emph{cf.} Proposition~\ref{pro:H1^d} ci-dessus) et de la remarque évidente suivante: si~un feuilletage $\F$ de degré $2$ sur~$\pp$ admet une singularité radiale $s$, alors l'ordre de radialité $\tau(\F,s)-1$ de~$s$ est égal à $1.$
\end{proof}

\noindent Une propriété caractéristique de la $\mathrm{Aut}(\pp)$-orbite du feuilletage de \textsc{Fermat} $\F_{0}^{d}$ est donnée par \cite[Proposition~6.3]{BM18}:
\begin{pro}[\cite{BM18}]\label{pro:F0-d}
{\sl Soit $\F$ un feuilletage de degré $d\geq2$ sur $\pp$ ayant trois singularités radiales d'ordre maximal $d-1$, non alignées. Alors $\F$ est linéairement conjugué au feuilletage de \textsc{Fermat} $\F_{0}^{d}$ défini par la $1$-forme~$\omegaoverline_{0}^{d}=(x^{d}-x)\mathrm{d}y-(y^{d}-y)\mathrm{d}x.$
}
\end{pro}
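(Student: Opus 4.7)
\smallskip

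The plan is to place the three radial singularities at the standard coordinate vertices $e_{1}=[1:0:0]$, $e_{2}=[0:1:0]$, $e_{3}=[0:0:1]$, which is possible by a projective transformation since they are not aligned, and to write $\F$ in homogeneous coordinates as $\omega = a\,\mathrm{d}X + b\,\mathrm{d}Y + c\,\mathrm{d}Z$ with $a, b, c$ homogeneous of degree $d+1$ satisfying $Xa + Yb + Zc = 0$. The crucial local-to-global reduction goes as follows: in the affine chart $(x, y) = (X/Z, Y/Z)$ around $e_{3}$, a vector field tangent to $\F$ is $V = b(x, y, 1)\,\partial_{x} - a(x, y, 1)\,\partial_{y}$, and the Euler identity yields
\[
\det(V, \mathrm{R}_{e_{3}}) = x\,a(x, y, 1) + y\,b(x, y, 1) = -c(x, y, 1).
\]
The condition $\tau(\F, e_{3}) = d$ forces the $(d-1)$-jet of $V$ to be a polynomial multiple of the local radial vector field $\mathrm{R}_{e_{3}}$, so the polynomial $c(x, y, 1)$ must vanish at the origin to order $\geq d+1$. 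Since $c$ is homogeneous of degree $d+1$ in $X, Y, Z$, every monomial of $c$ must have zero $Z$-exponent, i.e.\ $c = c(X, Y)$ depends only on $X$ and $Y$. The analogous arguments at $e_{1}$ and $e_{2}$ give $a = a(Y, Z)$ and $b = b(X, Z)$.

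Substituting back into the Euler identity one obtains $Xa(Y, Z) + Yb(X, Z) + Zc(X, Y) = 0$, and extracting the coefficient of each monomial $X^{\alpha}Y^{\beta}Z^{\gamma}$ with $\alpha + \beta + \gamma = d+2$ --- noting that $Xa$ contributes only when $\alpha = 1$, $Yb$ only when $\beta = 1$, and $Zc$ only when $\gamma = 1$ --- a short case analysis shows that the only admissible $(a, b, c)$ is of the form
\[
a = \lambda YZ^{d} + \mu Y^{d}Z,\quad
b = -\lambda XZ^{d} + \nu X^{d}Z,\quad
c = -\mu XY^{d} - \nu X^{d}Y,
\]
for some triple $(\lambda, \mu, \nu) \in \mathbb{C}^{3}$. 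Moreover the radial condition $\nu(\F, e_{i}) = 1$ at each vertex (non-vanishing linear part of $V$ at $e_{i}$) forces each of $\lambda, \mu, \nu$ to be non-zero.

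The final step is to normalize the triple $[\lambda : \mu : \nu] \in \mathbb{P}^{2}$ using the residual subgroup of $\mathrm{Aut}(\pp)$ stabilizing $\{e_{1}, e_{2}, e_{3}\}$. Under the diagonal transformation $\mathrm{diag}(p, q, r)$ the triple is multiplied componentwise by $(pqr^{d}, pq^{d}r, p^{d}qr)$, which in projective coordinates reads $[\lambda : \mu : \nu] \mapsto [r^{d-1}\lambda : q^{d-1}\mu : p^{d-1}\nu]$. Since $d \geq 2$ makes $t \mapsto t^{d-1}$ surjective on $\mathbb{C}^{*}$, this torus action is transitive on the open stratum $\{\lambda\mu\nu \neq 0\} \subset \mathbb{P}^{2}$, and one may normalize $[\lambda : \mu : \nu]$ to $[1 : -1 : 1]$, which corresponds precisely to the Fermat form $\omegaoverline_{0}^{d} = (x^{d}-x)\,\mathrm{d}y - (y^{d}-y)\,\mathrm{d}x$. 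The main difficulty I anticipate is the first step --- carefully translating the jet-theoretic condition $\tau(\F, e_{i}) = d$ into the algebraic statement that each of $a, b, c$ depends on only two of the three homogeneous coordinates --- for which one must combine the definition of $\tau$ with the Euler identity and the homogeneity of the coefficients.
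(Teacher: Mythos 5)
Your argument is correct and complete. Note first that the paper does not actually prove this proposition: it imports it verbatim from \cite[Proposition~6.3]{BM18}, so there is no internal proof to compare against, and your computation supplies a self-contained derivation. The key steps all check out. With the three radial points at the coordinate vertices, the identity $\det(V,\mathrm{R}_{e_3})=x\,a(x,y,1)+y\,b(x,y,1)=-c(x,y,1)$ is correct, and since $\det(J^{k}_{e_3}V,\mathrm{R}_{e_3})$ is exactly the truncation of $\det(V,\mathrm{R}_{e_3})$ to degree $k+1$ (the coefficients of $\mathrm{R}_{e_3}$ being homogeneous linear), the condition $\tau(\F,e_3)=d$ does force $c(x,y,1)$ to vanish to order $\geq d+1$ at the origin, hence $c=c(X,Y)$; the argument is even insensitive to a possible unit common factor of $a(x,y,1)$ and $b(x,y,1)$. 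The monomial bookkeeping in the Euler relation is right (a monomial of $Xa(Y,Z)$ can only be cancelled if its $Y$- or $Z$-exponent equals $1$, which leaves exactly $YZ^{d}$ and $Y^{d}Z$ in $a$, and similarly for $b$ and $c$), the conditions $\nu(\F,e_i)=1$ do force $\lambda\mu\nu\neq0$ since the linear parts of the local vector fields are $-\lambda\mathrm{R}$, $-\mu\mathrm{R}$, $-\nu\mathrm{R}$ respectively, and the torus normalization via surjectivity of $t\mapsto t^{d-1}$ on $\mathbb{C}^{*}$ correctly reduces $[\lambda:\mu:\nu]$ to $[1:-1:1]$, which in the chart $z=1$ gives $(y-y^{d})\mathrm{d}x+(x^{d}-x)\mathrm{d}y=\omegaoverline_{0}^{d}$. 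It is worth observing that your translation of $\tau(\F,s)=d$ into the algebraic statement that one coefficient of $\omega$ depends on only two of the three homogeneous coordinates is precisely the technique this paper itself employs in its proof of Proposition~\ref{pro:F1^d}, where $\tau(\F,s_2)=d$ forces $B_{d}$ and $C_{d}$ to be monomials in $x$; so your route is entirely in the spirit of the source, only carried out symmetrically at three points instead of one.
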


\noindent Une propriété caractéristique de la $\mathrm{Aut}(\pp)$-orbite du feuilletage $\F_{1}^{d}$ est donnée par la:
\begin{pro}\label{pro:F1^d}
{\sl Soit $\F$ un feuilletage convexe de degré $d\geq2$ sur $\pp.$ Supposons que $\F$ possède une singularité $s_1$ de multiplicité algébrique maximale $d$ et une singularité $s_2$ radiale d'ordre maximal $d-1.$ Alors
\begin{itemize}
\item [--] ou bien $\F$ est homogène;
\item [--] ou bien $\F$ est linéairement conjugué au feuilletage $\F_{1}^{d}$ décrit par la $1$-forme~$\omegaoverline_{1}^{d}=y^{d}\mathrm{d}x+x^{d}(x\mathrm{d}y-y\mathrm{d}x).$
\end{itemize}
}
\end{pro}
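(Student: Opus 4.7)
The plan is to normalize $s_1$ and $s_2$ by automorphisms of $\pp$, derive an explicit normal form for the defining $1$-form of $\F$, and then split into two subcases. By $2$-transitivity of $\mathrm{Aut}(\pp)$, I may assume $s_1 = [0{:}0{:}1]$ (origin of the affine chart $\{z = 1\}$) and $s_2 = [0{:}1{:}0]$. Writing $\omega = A(x,y)\,\mathrm{d}x + B(x,y)\,\mathrm{d}y$ in the affine chart, the hypotheses $\deg\F = d$ and $\nu(\F, s_1) = d$ restrict $A$ and $B$ to monomials of total degrees $d$ and $d+1$ in $(x,y)$, and the degree-$(d{+}1)$ parts must satisfy $xA_{d+1} + yB_{d+1} \equiv 0$. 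Thus $\omega = A_d\,\mathrm{d}x + B_d\,\mathrm{d}y + C_d\,(y\,\mathrm{d}x - x\,\mathrm{d}y)$ with $A_d, B_d, C_d \in \C[x,y]$ homogeneous of degree $d$. If $C_d \equiv 0$, the foliation is already homogeneous and we are done. Otherwise, expressing $\omega$ in a chart $(u,v) = (x/y, 1/y)$ around $s_2$ and imposing, order by order up to $d-1$, that $\F$ be radial of order $d-1$ at $s_2$ forces $B_d = bx^d$ and $C_d = cx^d$ for some constants $b, c \in \C$, together with $A_d(0,1) \neq 0$. After scaling, $A_d(0,1) = 1$ and $c \neq 0$.

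The argument then dichotomizes on whether $b$ vanishes. If $b \neq 0$, the line $\{cx = b\}$ is $\F$-invariant (since $B(b/c, y) = 0$), and the projective automorphism $\phi : [X{:}Y{:}Z] \mapsto [X : Y : Z - (c/b)X]$ sends this line to the line at infinity of the new affine chart. A direct substitution in the homogeneous representation of $\omega$ shows that $\phi^{*}\F$ is described in the new affine chart $\{Z = 1\}$ by $A_d(X, Y)\,\mathrm{d}X + bX^d\,\mathrm{d}Y$, whose coefficients are both homogeneous of degree $d$ in $(X,Y)$. Hence $\F$ is linearly conjugate to a homogeneous foliation, establishing the first alternative of the proposition; convexity is not invoked here.

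If $b = 0$, convexity enters decisively. A direct computation of the affine inflection polynomial $\Phi = A^2 B_y - AB(A_y + B_x) + B^2 A_x$, using $B_y = 0$ together with Euler's identity $xA_{d,x} + yA_{d,y} = dA_d$, collapses after several cancellations to
\[
\Phi = c\,x^{d+1}\,A_{d,y}(x,y)\,A_d(x,y).
\]
The line $\{x = 0\}$ and the lines $\{y = \zeta_i x\}$ with $A_d(1, \zeta_i) = 0$ are $\F$-invariant, the latter being precisely the invariant lines of $\F$ through $s_1$. Convexity then requires each irreducible factor of $A_{d,y}$ to also define such an invariant line. Setting $p(\zeta) := A_d(1, \zeta)$, this amounts to the requirement that every root of $p'$ be a root of $p$; a standard multiplicity argument (writing $p = \prod (\zeta - \zeta_i)^{m_i}$ and examining the $k-1$ extra roots of $p'$) forces $p(\zeta) = (\zeta - a)^d$ for some $a \in \C$, whence $A_d(x, y) = (y - ax)^d$. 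A shear $y \mapsto y + ax$ (fixing $s_1$ and $s_2$) reduces to $A_d = y^d$, and a diagonal scaling in $(x, y)$ normalizes $c$ to $-1$. One obtains $\omega = \omegaoverline_{1}^{d}$, so $\F$ is linearly conjugate to $\F_{1}^{d}$.

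The principal technical obstacle is the simplification of $\Phi$ in the case $b = 0$: the factored expression above emerges only after careful use of Euler's relation together with the cancellation of several cross-terms produced by the product expansion.
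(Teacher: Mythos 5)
Your proof is correct and follows essentially the same route as the paper: the same normalization $s_1=[0{:}0{:}1]$, $s_2=[0{:}1{:}0]$ leading to $B_d=bx^d$, $C_d=cx^d$ with $A_d(0,1)\neq 0$, the same factorization of the inflection divisor as $c\,x^{d+1}A_d\,A_{d,y}$, and the same conclusion $P(t)=\alpha(t-a)^d$ from convexity. The only (cosmetic) differences are that the paper treats the first alternative by contraposition --- if $\F$ is not homogeneous, every invariant line passes through $s_1$, which forces $b=0$ since $\{cx=b\}$ is invariant --- rather than by your explicit conjugation sending that line to infinity, and it computes $\IF$ via the homogeneous determinant formula rather than the affine inflection polynomial.
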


\begin{rem}\label{rem:caractérisation-feuilletage-homogène}
Notons qu'un feuilletage de degré $d$ sur $\pp$ est homogène si et seulement s'il possède une singularité de multiplicité algébrique maximale $d$ et une droite invariante ne passant pas par cette singularité.
\end{rem}

\begin{proof}
Choisissons un système de coordonnées homogènes $[x:y:z]\in\pp$ tel que $s_1=[0:0:1]$ et $s_2=[0:1:0].$ Par hypothèse nous avons $\nu(\F,s_1)=d$, $\nu(\F,s_2)=1$ et $\tau(\F,s_2)=d.$ L'égalité $\nu(\F,s_1)=d$ assure que toute $1$-forme $\omega$ décrivant $\F$ dans la carte affine $z=1$ est du type
\begin{align*}
\omega=A_{d}(x,y)\mathrm{d}x+B_{d}(x,y)\mathrm{d}y+C_d(x,y)(x\mathrm{d}y-y\mathrm{d}x),
\end{align*}
où $A_{d}$, $B_{d}$ et $C_{d}$ sont des polynômes homogènes de degré $d.$
\vspace{1mm}

\noindent Dans la carte affine $y=1$ le feuilletage $\F$ est donné par
\begin{align*}
\theta=-C_{d}(x,1)\mathrm{d}x-B_{d}(x,1)\mathrm{d}z+A_{d}(x,1)(z\mathrm{d}x-x\mathrm{d}z)\hspace{0.5mm};
\end{align*}
nous avons $\theta\wedge(z\mathrm{d}x-x\mathrm{d}z)=Q(x,z)\mathrm{d}x\wedge\mathrm{d}z$, où $Q(x,z)=x\hspace{0.1mm}C_{d}(x,1)+z\hspace{0.1mm}B_{d}(x,1).$ L'égalité $\tau(\F,s_2)=d$ se traduit alors par le fait que le polynôme $Q\in\mathbb{C}[x,z]$ est homogène non nul de degré $d+1$, ce qui permet d'écrire $B_d(x,y)=\beta\hspace{0.1mm}x^d$,\, $C_d(x,y)=\delta x^d,$\, avec $\beta,\delta\in\mathbb{C},$ $|\beta|+|\delta|\neq0.$ Par suite nous avons $J^{1}_{(0,0)}\theta=A_{d}(0,1)(z\mathrm{d}x-x\mathrm{d}z)$; alors l'égalité $\nu(\F,s_2)=1$ assure que $A_{d}(0,1)\neq0.$ Ainsi $\theta$ s'écrit
\begin{align}\label{equa:theta}
&\hspace{2.5cm}\theta=-x^d\mathrm{d}\left(\delta x+\beta z\right)+A_{d}(x,1)\left(z\mathrm{d}x-x\mathrm{d}z\right),
&&\quad\beta,\delta\in\mathbb{C},\hspace{1mm}|\beta|+|\delta|\neq0,\hspace{1mm}A_{d}(0,1)\neq0.
\end{align}

\noindent Supposons que $\F$ ne soit pas homogène; comme $\nu(\F,s_1)=d$, toute droite invariante par $\F$ doit passer par~$s_1$ (Remarque~\ref{rem:caractérisation-feuilletage-homogène}) et doit donc être de la forme $(ax+by=0)$, $[a:b]\in\mathbb{P}^{1}_{\mathbb{C}}.$ Or nous remarquons à partir de~(\ref{equa:theta})~que la droite $\elltext=(\delta x+\beta z=0)$ est invariante par $\F.$ Il en résulte que $\beta=0,$ $\delta\neq0$ et~$\elltext=(x=0).$

\noindent Par conséquent
\begin{align*}
&\hspace{2.5cm}\omega=A_{d}(x,y)\mathrm{d}x+\delta\hspace{0.1mm}x^d(x\mathrm{d}y-y\mathrm{d}x),
&& \delta\,A_{d}(0,1)\in\mathbb{C}^{*}.
\end{align*}

\noindent Posons $P(t)=A_{d}(1,t)$; puisque $A_{d}(0,1)\neq0$ le polynôme $P\in\mathbb{C}[t]$ est de degré $d.$ De plus l'homogénéité de~$A_{d}$ entraîne que
\begin{align*}
&\omega=x^d\left(P\left(\frac{y}{x}\right)\mathrm{d}x+\delta(x\mathrm{d}y-y\mathrm{d}x)\right),
\qquad\hspace{3mm} \delta\in\mathbb{C}^{*}.
\end{align*}

\noindent Nous constatons qu'une droite de la forme $(y=rx)$, $r\in\mathbb{C},$ est invariante par $\F$ si et seulement si $r$ est une racine du polynôme $P.$ Il s'en suit que l'ensemble des droites invariantes par $\F$ est constitué de la droite $(x=0)$ et des droites $(y=rx),$ où  $r$ parcourt l'ensemble des racines de $P.$
\vspace{1mm}

\noindent En coordonnées homogènes, le feuilletage $\F$ est décrit par le champ de vecteurs $0\frac{\partial}{\partial x}+A_{d}(x,y)\frac{\partial}{\partial y}+\delta\hspace{0.1mm}x^d\frac{\partial}{\partial z}$; d'après la formule (\ref{equa:ext1}), le diviseur d'inflexion $\IF$ de $\mathcal{F}$ est donné par
\begin{equation*}
0=\left| \begin{array}{ccc}
x  & 0                       & 0                                    \\
y  & A_{d}(x,y)              & \dfrac{\partial A_{d}}{\partial y}(x,y)\\
z  & \delta\hspace{0.1mm}x^d & 0
\end{array} \right|
=-\delta\hspace{0.1mm}x^{d+1}A_{d}(x,y)\dfrac{\partial A_{d}}{\partial y}(x,y)=-\delta\hspace{0.1mm}x^{3d}P\left(\frac{y}{x}\right)P'\left(\frac{y}{x}\right).
\end{equation*}
Comme $\F$ est par hypothèse convexe, nous en déduisons que toute racine de $P'$ est une racine de $P$ et donc que $P$ est divisible par sa dérivée $P'.$ Le polynôme $P$ est alors nécessairement de la forme $P(t)=\alpha(t-a)^d,$ avec $a\in\mathbb{C}$ et $\alpha\in\mathbb{C}^{*},$ d'où
\begin{align*}
&\hspace{1.5cm}\omega=\alpha(y-ax)^{d}\mathrm{d}x+\delta\hspace{0.1mm}x^d(x\mathrm{d}y-y\mathrm{d}x),
\qquad\hspace{3mm} a\in\mathbb{C},\hspace{1mm} \alpha,\hspace{0.2mm}\delta\in\mathbb{C}^{*}.
\end{align*}

\noindent La $1$-forme $\omega$ est linéairement conjuguée à $\omegaoverline_{1}^{d}=y^{d}\mathrm{d}x+x^{d}(x\mathrm{d}y-y\mathrm{d}x)$; en effet
\[
\hspace{-0.9cm}\omegaoverline_{1}^{d}=\frac{\raisebox{-0.5mm}{$\delta^{d+1}$}}{\alpha^{d+2}}\varphi^*\omega,
\quad \text{où}\hspace{1.5mm}
\varphi=\left(\frac{\raisebox{-0.5mm}{$\alpha$}}{\delta}\hspace{0.1mm}x,\frac{\raisebox{-0.5mm}{$\alpha$}}{\delta}\left(y+ax\right)\right).
\]
\end{proof}

\noindent Le lemme suivant joue un rôle crucial dans la preuve du Théorème~\ref{thmalph:class-feuilletage-convexe-2}.
\begin{lem}\label{lem:cardinal(SigmaF)}
{\sl Soit $\F$ un feuilletage convexe de degré $d\geq2$ sur $\pp.$ Alors l'ensemble $\Sigma_{\F}$ des points singuliers~$s\in\Sing\F$ tels que $\tau(\F,s)\geq2$ est de cardinal supérieur ou égal à~$2.$
}
\end{lem}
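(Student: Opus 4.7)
\smallskip

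\noindent L'idée principale repose sur l'identité \eqref{equa:Delta-LegF}. Puisque $\F$ est convexe, son diviseur d'inflexion est totalement invariant : $\IF=\IinvF,$ de sorte que $\ItrF=0$ et donc $\mathcal{G}_{\F}(\ItrF)=\emptyset.$ L'égalité \eqref{equa:Delta-LegF} se spécialise alors en $\mathrm{supp}\,\Delta(\Leg\F)=\check{\Sigma}_{\F},$ c'est-à-dire que le support du discriminant du tissu dual de $\F$ est précisément la réunion des droites duales des points de $\Sigma_{\F}.$ L'énoncé équivaut donc à montrer que ce support contient au moins deux droites distinctes de $\pd.$

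\smallskip

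\noindent Je procéderai par l'absurde en supposant que $\#\Sigma_{\F}\leq 1.$ Si $\Sigma_{\F}=\emptyset,$ alors $\Delta(\Leg\F)$ est trivial et $\Leg\F$ serait un $d$-tissu partout lisse sur $\pd,$ ce qui contredit \cite[Proposition~3.3]{MP13}, assurant la non-trivialité du discriminant de la transformée de \textsc{Legendre} d'un feuilletage de degré $d\geq 2.$ Si $\Sigma_{\F}=\{s\},$ alors $\Delta(\Leg\F)=m\cdot\check{s}$ pour un certain entier $m\geq 1,$ où $\check{s}\subset\pd$ désigne la droite duale de $s.$ Je comparerai alors une \emph{majoration} de la multiplicité $m$ par une fonction des invariants locaux $\nu(\F,s)$ et $\tau(\F,s)$ (obtenue par une étude locale du tissu $\Leg\F$ au voisinage de $\check{s}$) à une \emph{minoration} du degré total de $\Delta(\Leg\F)$ fournie par \cite[Proposition~3.3]{MP13}, pour aboutir à une contradiction dès que $d\geq 2.$

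\smallskip

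\noindent La difficulté principale sera d'articuler précisément cette étude locale : il s'agit d'exprimer explicitement la contribution de $s$ au discriminant $\Delta(\Leg\F)$ en fonction de $\nu(\F,s)$ et $\tau(\F,s),$ puis de vérifier que cette contribution ne peut rendre compte à elle seule du degré global imposé par la convexité de $\F$ et par la condition $d\geq 2.$ C'est cette incompatibilité numérique entre contribution locale maximale et degré global minimal qui fournira la contradiction souhaitée.
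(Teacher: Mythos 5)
Your opening reduction --- convexity forces $\ItrF=0$, so that by formula~(\ref{equa:Delta-LegF}) the support of $\Delta(\Leg\F)$ is exactly the union of the dual lines $\check{\ell}_{s}$, $s\in\Sigma_{\F}$ --- is the same as the paper's. But the heart of your argument, the case $\Sigma_{\F}=\{s\}$, is not a proof: you announce that you will bound the multiplicity $m$ of $\Delta(\Leg\F)$ along $\check{\ell}_{s}$ from above in terms of $\nu(\F,s)$ and $\tau(\F,s)$ and confront it with the total degree $\deg\Delta(\Leg\F)=(d-1)(d+2)$, but that upper bound is never stated, let alone proved, and you yourself flag it as the main difficulty. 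This is precisely the missing content, and it is not clear the numerical route goes through: \cite[Proposition~3.3]{MP13} only gives the \emph{lower} bound $\mathrm{mult}(\Delta(\Leg\F),\check{\ell}_{s})\geq\tau(\F,s)-1$ (with equality only under an extra hypothesis on the radiality order), and a single singular point of maximal algebraic multiplicity can carry a very large share of the discriminant --- for the homogeneous foliation $\mathcal{H}_{1}^{d}$ the dual line of the origin has multiplicity $d^{2}-d$ out of a total degree $d^{2}+d-2$ --- so the estimate you need is delicate and singularity-type dependent. A secondary point: in the case $\Sigma_{\F}=\emptyset$, the non-triviality of $\Delta(\Leg\F)$ follows from $\deg\Delta(\Leg\F)=(d-1)(d+2)>0$, not from \cite[Proposition~3.3]{MP13}, which concerns multiplicities along dual lines of radial singularities.

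The paper disposes of both cases at once with a topological argument that requires no local estimate: if $\#\,\Sigma_{\F}\leq1$, the support of $\Delta(\Leg\F)$ is contained in a single line of $\pd$, hence $\pd\setminus\Delta(\Leg\F)$ is simply connected, and \cite[Proposition~1.3.1]{PP15} then forces the $d$-web $\Leg\F$ to be completely decomposable --- contradicting the irreducibility of the Legendre transform of a foliation recalled in the first section. Replacing your degree count by this monodromy argument closes the gap.
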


\begin{proof}[\sl D\'emonstration]
En vertu de la convexité de $\F,$ la formule~(\ref{equa:Delta-LegF}) entraîne que le discriminant $\Delta(\Leg\F)$ de $\Leg\F$ est constitué des droites duales des points de $\Sigma_{\F}.$ Si $\Sigma_{\F}$ était vide ou réduit à un point, il en résulterait que $\pp\setminus\Delta(\Leg\F)$ serait simplement connexe de sorte que (\emph{cf.} \cite[Proposition~1.3.1]{PP15}) le $d$-tissu $\Leg\F$ serait complètement décomposable, ce qui contredirait son irréductibilité.
\end{proof}

\begin{rem}\label{rem:SigmaF}
Si $\F$ est un feuilletage de degré $d\geq2$ sur $\pp$, alors $$\Sigma_{\F}=\Sigma_{\F}^{\mathrm{rad}}\cup\{s\in\Sing\F\hspace{0.8mm}:\hspace{0.8mm}\nu(\F,s)\geq2\},$$
où $\Sigma_{\F}^{\mathrm{rad}}$ désigne l'ensemble des singularités radiales de $\F.$
\end{rem}

\begin{rem}\label{rem:mult(Delta(LegF))}
Soit $\F$ un feuilletage de degré $d\geq2$ sur $\pp$ tel que $\Sigma_{\F}^{\mathrm{rad}}$ ne soit pas vide. Soit $s\in\Sigma_{\F}^{\mathrm{rad}}.$ Désignons par $\mathrm{mult}(\Delta(\Leg\F),\check{\ell}_{s})$ la multiplicité du discriminant $\Delta(\Leg\F)$ de $\Leg\F$ le long de la droite $\check{\ell}_{s}$ duale de $s.$ Alors (\emph{voir} \cite[Proposition~3.3]{MP13})
\begin{align*}
\mathrm{mult}(\Delta(\Leg\F),\check{\ell}_{s})\geq\tau(\F,s)-1\hspace{0.5mm};
\end{align*}
l'égalité est réalisée si (condition suffisante) l'ordre de radialité $\tau(\F,s)-1$ de $s$ appartient à $\{d-2,d-1\}.$

\noindent En particulier, si $d=2$, resp. $d=3$, alors, pour tout $s\in\Sigma_{\F}^{\mathrm{rad}},$ on a
\begin{align*}
&\hspace{1cm}\mathrm{mult}(\Delta(\Leg\F),\check{\ell}_{s})=1,
&&
\text{resp.}\hspace{1.5mm}\mathrm{mult}(\Delta(\Leg\F),\check{\ell}_{s})=\tau(\F,s)-1\in\{1,2\}.
\end{align*}
\end{rem}

\begin{proof}[\sl D\'emonstration du Théorème~\ref{thmalph:class-feuilletage-convexe-2}]
Soit $\F$ un feuilletage convexe de degré $2$ sur $\pp.$ Pour tout $s\in\Sing\F$, nous avons $\nu(\F,s)\leq\deg\F=2$. Il résulte alors de la Remarque~\ref{rem:SigmaF} que
$$\Sigma_{\F}=\Sigma_{\F}^{\mathrm{rad}}\cup\{s\in\Sing\F\hspace{0.8mm}:\hspace{0.8mm}\nu(\F,s)=2\}.$$

\noindent Notons (\emph{voir} \cite[Lemme~2.11]{BM19F}) que tout feuilletage de degré supérieur ou égal à $2$ sur $\pp$ a au plus une singularité de multiplicité algébrique maximale, égale à son degré. Nous en déduisons l'alternative suivante:
\begin{itemize}
\item [$\bullet$] ou bien $\Sigma_{\F}=\Sigma_{\F}^{\mathrm{rad}}$;
\item [$\bullet$] ou bien $\Sigma_{\F}=\Sigma_{\F}^{\mathrm{rad}}\cup\{s_{1}\}$ pour un certain $s_1\in\Sing\F$ vérifiant $\nu(\F,s_{1})=2$.
\end{itemize}
Supposons dans un premier temps que $\Sigma_{\F}=\Sigma_{\F}^{\mathrm{rad}}.$ La convexité de $\F$ et la formule~(\ref{equa:Delta-LegF}) impliquent alors que le discriminant $\Delta(\Leg\F)$ du $2$-tissu $\Leg\F$ se décompose en produit de droites duales des points de $\Sigma_{\F}^{\mathrm{rad}}.$ Comme la droite duale de tout point de $\Sigma_{\F}^{\mathrm{rad}}$ est de  multiplicité $1$ dans $\Delta(\Leg\F)$ (Remarque~\ref{rem:mult(Delta(LegF))}) et comme $\deg(\Delta(\Leg\F))=(\deg\F-1)(\deg\F+2)=4$ (\emph{cf.} \cite[page~177]{MP13}), il en résulte que $\#\hspace{0.5mm}\Sigma_{\F}^{\mathrm{rad}}=4.$
Les~quatre~points~de~$\Sigma_{\F}^{\mathrm{rad}}$ ne sont pas alignés, car toute droite de $\pp$ ne peut contenir plus de $\deg\F+1=3$ points singuliers de $\F.$ Il s'en suit en particulier que $\F$ possède trois singularités radiales non-alignées; ceci~entraîne, d'après \cite[Proposition~6.3]{BM18}, que $\F$ est linéairement conjugué au feuilletage de \textsc{Fermat} $\F_{0}^{2}$.
\smallskip

\noindent Supposons maintenant qu'il existe $s_1\in\Sing\F$ tel que $\Sigma_{\F}=\Sigma_{\F}^{\mathrm{rad}}\cup\{s_{1}\}$ et $\nu(\F,s_{1})=2.$ Par le Lemme~\ref{lem:cardinal(SigmaF)}, $\#\hspace{0.5mm}\Sigma_{\F}\geq2$ et donc $\Sigma_{\F}^{\mathrm{rad}}$ est non vide; d'où la présence d'une singularité radiale $s_2$ de $\F.$ Par suite, d'après la Proposition~\ref{pro:F1^d}, ou bien $\F$ est homogène, auquel cas $\F$ est linéairement conjugué au feuilletage~$\mathcal{H}_{1}^{2}$ en vertu du Corollaire~\ref{cor:class-feuilletage-homogène-convexe-2}, ou bien $\F$ est linéairement conjugué au feuilletage~$\F_{1}^{2}.$
\end{proof}



\begin{thebibliography}{12}

\bibitem{BFM13}
A.~Beltr\'{a}n, M.~Falla~Luza, and D.~Mar\'{\i}n.
\newblock Flat 3-webs of degree one on the projective plane.
\newblock {\em Ann. Fac. Sci. Toulouse Math. (6)}, 23(4):779--796, 2014.

\bibitem{Bru15}
M.~Brunella.
\newblock {\em Birational geometry of foliations}, volume~1 of {\em IMPA Monographs}.
\newblock Springer, Cham, 2015.

\bibitem{Bed17}
S.~Bedrouni.
\newblock {\em Feuilletages de degr\'{e} trois du plan projectif complexe ayant une transform\'{e}e de Legendre plate}.
\newblock  PhD~thesis, University of Sciences and Technology Houari Boumediene, 2017. Available on \url{https://arxiv.org/abs/1712.03895}.

\bibitem{BM18}
S.~Bedrouni and D.~Mar\'{\i}n.
\newblock Tissus plats et feuilletages homog\`{e}nes sur le plan projectif complexe.
\newblock {\em Bull. Soc. Math. France}, 146(3):479--516, 2018.

\bibitem{BM19F}
S.~Bedrouni and D.~Mar\'{\i}n.
\newblock Classification of foliations of degree three on {$\pp$} with a flat Legendre transform.
\newblock To~appear in {\em Ann. Inst. Fourier (Grenoble)}, 2019.

\bibitem{FP15}
C.~Favre and J.~V.~Pereira.
\newblock Webs invariant by rational maps on surfaces.
\newblock {\em Rend. Circ. Mat. Palermo (2)}, 64(3):403--431, 2015.

\bibitem{MP13}
D.~Mar\'{\i}n and J.~V. Pereira.
\newblock Rigid flat webs on the projective plane.
\newblock {\em Asian J. Math.} 17(1):163--191, 2013.

\bibitem{Per01}
J.~V. Pereira.
\newblock Vector fields, invariant varieties and linear systems.
\newblock {\em Ann. Inst. Fourier (Grenoble)}, 51(5):1385--1405, 2001.

\bibitem{PP15}
J.~V. Pereira and L.~Pirio.
\newblock {\em An invitation to web geometry}, volume~2 of {\em IMPA Monographs}.
\newblock Springer, Cham, 2015.

\bibitem{SV04}
D.~Schlomiuk and N.~Vulpe.
\newblock Planar quadratic vector fields with invariant lines of total multiplicity at least five.
\newblock {\em Qual. Theory Dyn. Syst.}, 5(1):135--194, 2004.

\end{thebibliography}
\end{document}